\documentclass [12pt,reqno]{amsart}

\usepackage{graphicx} 
\usepackage{pifont}
\usepackage{graphicx}
\usepackage{xcolor}
\usepackage{amsmath}
\usepackage{amsthm}
\usepackage{amsfonts}
\usepackage{amssymb}
\usepackage{url}
\usepackage{setspace}  
\usepackage{lipsum}

\newtheorem{theorem}{Theorem}[section]

\theoremstyle{definition}

\theoremstyle{remark}
\newtheorem{remark}[theorem]{Remark}

\theoremstyle{lemma}

\theoremstyle{proposition}
\newtheorem{proposition}[theorem]{Proposition}

\begin{document}
\title[\tiny{Weak $(1,1)$ bounded operators}]{Weak $(1,1)$ bounded operators}
	
	\author{Arup Kumar Maity}

\address{ Department of Mathematics, NIT Sikkim, Ravangla-737139, India}

\email{arup.anit@gmail.com}
	\begin{abstract}
		We construct a class of Fourier multipliers whose associated operators are weak $(1,1)$ bounded but fail to be weak $(p,p)$ bounded for any $1<p \leq \infty$. Moreover, we show that this result is sharp.
	\end{abstract}
	
	\subjclass[2020]{Primary 42B15; Secondary 42B35}
	
	\keywords{Fourier multiplier, Weak (1,1) boundedness, Lorentz space}
	
	\maketitle
	
\section{Introduction}
\noindent Let $1 \leq p < \infty$, and denote by  $L^p(\mathbb{R}^n)$ the space of $p$-integrable functions on $\mathbb{R}^n$. The space $L^{\infty}(\mathbb{R}^n)$ consists of essentially bounded measurable functions. In the context of distribution theory, we also consider the Schwartz space $\mathcal{S}(\mathbb{R}^n)$ consists of all smooth functions $f$ satisfying the rapid decay condition $$|x^{\alpha}\partial^{\beta}f(x)| < B(f, \alpha, \beta)$$ for every pair of multi-indices $\alpha, \beta$. The topology of $\mathcal{S}(\mathbb{R}^n)$ is defined by a countable family of seminorms $$p_N(f)=\text{Sup}_{|\alpha|, |\beta| \leq N}\|x^{\alpha}\partial^{\beta}f(x)\|_{\infty},$$ for each non-negative integer $N$. The dual space of $\mathcal{S}(\mathbb{R}^n)$, denoted by $\mathcal{S}'(\mathbb{R}^n)$, consists of tempered distributions, which are continuous linear functionals of $\mathcal{S}(\mathbb{R}^n)$ (see \cite{GF}). Given a tempered distribution $m$ in $\mathcal{S}'(\mathbb{R}^n)$, define the operator $T_m : \mathcal{S}(\mathbb{R}^n) \to \mathcal{S}'(\mathbb{R}^n)$ by $$\widehat{T_mf}= m\hat{f},$$ where $\hat{f}$ is the Fourier transform of $f$. The right-hand side is a tempered distribution defined by $$\langle m\hat{f}, \phi \rangle= \langle m, \hat{f}\phi \rangle,$$ where $\phi \in \mathcal{S}(\mathbb{R}^n)$. It follows that the of $T_m$ on $f$ can be expressed as the convolution $$ T_mf = \Check{m} * f,$$ where $\Check{m}$ is the inverse Fourier transform of $m$. The Fourier transform extends to an isomorphism on the space of tempered distributions $\mathcal{S}'(\mathbb{R}^n)$, thus making the above formulation well-defined. In this context, $m$ is referred to as a Fourier multiplier, and the associated operator $T_m$ is called a Fourier multiplier operator. For further discussion on Fourier multipliers and their properties, we refer the reader to \cite{B, H1, H2, hor, si}. 

The note introduces a special class of multipliers $m$ that behave just well enough to be weak $(1,1)$ bounded, but fail to be weak
$(p,p)$ bounded for any 
$1 < p \leq \infty$. An operator $S: L^p(\mathbb{R}^n) \to L^{p, \infty}(\mathbb{R}^n)$ is weak $(p,p)$ bounded if for a positive $\alpha$ we have $$|\{x: |Sf(x)|> \alpha\}| \leq C(\|f\|_p/\alpha)^p,$$ where $C$ is independent of $f$. For $1\leq q \leq 2$, consider the class $$\mathcal{A}=\left \{\sum_if_i*g_i: \sum_i\|f_i\|_1\|g_i\|_q < \infty\right \}.$$ That kind of class was studied in \cite{F}, to classify the multiplier space. First note that $\mathcal{A} \subset L^q(\mathbb{R}^n)$. One can check that $\mathcal{A}$ is a Banach space with norm $$|||h|||_{\mathcal{A}}= \text{inf}_{f_i, g_i} \left \{\sum_i\|f_i\|_1\|g_i\|_q : h=\sum_if_i*g_i\right \},$$ where $f_i, g_i$ varies as one $\sum_if_i*g_i$ may have a 
different representation $\sum_i\Tilde{f_i}*\Tilde{g_i}$. We show that for some $q$ in that range, the elements of $\mathcal{A}$ (thought of as Fourier multipliers $m$) give rise to operators $T_m$ that are weak $(1,1)$ but not weak $(p,p)$ for the other $p$. 

Finally, we show for those $m \in \mathcal{A}$ the operator $T_m$ is $L^1(\mathbb{R}^n) \to L^{1,r}(\mathbb{R}^n)$ not bounded for $1\leq r < \infty$. So $T_m$ is weak $(1,1)$ bounded, which is sharp in the above sense. Here $L^{1,r}(\mathbb{R}^n)$ is the usual Lorentz space with norm $$\|f\|_{1,r}= \left(\int_0^{\infty}(tf^*(t))^r\frac{dt}{t}\right)^{\frac{1}{r}},$$ where $f^*$ is the non-increasing rearrangement of $f$ (see \cite{Z2}). Most familiar Fourier multipliers (e.g., Hilbert transform, Riesz transforms) are bounded on all $L^p(\mathbb{R}^n)$, $1< p < \infty$, and weak $(1,1)$ at the endpoint. Here, we have a very delicate example: bounded only at $p=1$ in the weakest possible sense, failing for any $p>1$.
This shows that certain factorizable $L^q(\mathbb{R}^n)$ symbols are "just barely" good enough for $L^1$-endpoint control.
\section{Weak $(1,1)$ bound}
\noindent In the current section, we prove our main result. Before that, we need to state another well-known result (stated in \cite{Z1}), a useful and important sufficient condition for boundedness in singular integral theory. It is useful to prove our principal theorem. 
\begin{theorem} \label{SI}
    Let $S: L^1(\mathbb{R}^n) \to M(\mathbb{R}^n)$ be a linear operator which is defined by $$Sf(x)=K*f(x),$$ where $K$ is the kernel and $M(\mathbb{R}^n)$, space of measurable functions. Then the operator $S$ is weak $(1,1)$ bounded if $$\int_{S^{n-1}}|K(x)|\text{log}(1+|K(x)|)d\sigma_{x} < \infty,$$ where $d\sigma_x$ is the surface measure of the unit sphere.
\end{theorem}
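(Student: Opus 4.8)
The plan is to prove the distributional bound $\lvert\{x:\lvert Sf(x)\rvert>\alpha\}\rvert\le C\alpha^{-1}\lVert f\rVert_{1}$ for every $f\in L^{1}(\mathbb{R}^{n})$ and every $\alpha>0$ by a Calderón--Zygmund argument, the hypothesis $\int_{S^{n-1}}\lvert K\rvert\log(1+\lvert K\rvert)\,d\sigma<\infty$ serving as the substitute for the (Hörmander-type) kernel regularity that is missing here. (Here $K$ is understood in the standard homogeneous form $K(x)=\Omega(x/\lvert x\rvert)\lvert x\rvert^{-n}$ with $\Omega$ of mean value zero on $S^{n-1}$, so that the displayed hypothesis is exactly $\Omega\in L\log L(S^{n-1})$; by density it is enough to treat, say, simple $f$.) Fix $\alpha$ and perform the Calderón--Zygmund decomposition of $f$ at height $\alpha$: $f=g+b$, $b=\sum_{j}b_{j}$, with $\operatorname{supp}b_{j}\subset Q_{j}$, $\int b_{j}=0$, $\lVert b_{j}\rVert_{1}\lesssim\alpha\lvert Q_{j}\rvert$, $\lVert g\rVert_{\infty}\lesssim\alpha$, $\lVert g\rVert_{1}\le\lVert f\rVert_{1}$, and $\sum_{j}\lvert Q_{j}\rvert\lesssim\alpha^{-1}\lVert f\rVert_{1}$. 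The operator $S$ is bounded on $L^{2}(\mathbb{R}^{n})$: writing $\omega=\xi/\lvert\xi\rvert$, its Fourier multiplier equals, up to a dimensional constant, $\int_{S^{n-1}}\Omega(\theta)\bigl(\log\frac{1}{\lvert\theta\cdot\omega\rvert}-\frac{i\pi}{2}\operatorname{sgn}(\theta\cdot\omega)\bigr)\,d\sigma(\theta)$, whose modulus is $\lesssim\lVert\Omega\rVert_{L\log L(S^{n-1})}$ uniformly in $\xi$ by $L\log L$--$\exp L$ duality together with $\sup_{\omega\in S^{n-1}}\int_{S^{n-1}}\lvert\theta\cdot\omega\rvert^{-c}\,d\sigma(\theta)<\infty$ for $0<c<1$. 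Hence the good part is dispatched by Chebyshev's inequality: $\lvert\{\lvert Sg\rvert>\alpha/2\}\rvert\lesssim\alpha^{-2}\lVert Sg\rVert_{2}^{2}\lesssim\alpha^{-2}\lVert g\rVert_{\infty}\lVert g\rVert_{1}\lesssim\alpha^{-1}\lVert f\rVert_{1}$.

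It remains to estimate $Sb$ away from the enlarged exceptional set $U=\bigcup_{j}Q_{j}^{*}$, where $Q_{j}^{*}$ is a fixed dilate of $Q_{j}$ and $\lvert U\rvert\lesssim\alpha^{-1}\lVert f\rVert_{1}$; concretely I need $\lvert\{x\notin U:\lvert Sb(x)\rvert>\alpha/2\}\rvert\lesssim\alpha^{-1}\lVert f\rVert_{1}$. For a smooth Calderón--Zygmund kernel this would follow by bounding the set by $\alpha^{-1}\sum_{j}\int_{(Q_{j}^{*})^{c}}\lvert Sb_{j}\rvert\,dx$ and then using $\int b_{j}=0$ with the Hörmander estimate $\sup_{y\in Q_{j}}\int_{(Q_{j}^{*})^{c}}\lvert K(x-y)-K(x-c_{j})\rvert\,dx\le C$. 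This is exactly the point of failure and the main obstacle of the theorem: because $\Omega$ carries no modulus of continuity, the Hörmander integral diverges (it already does when $\Omega\in L^{\infty}(S^{n-1})$), so the argument must be rebuilt using the $L\log L$ hypothesis in place of smoothness. The tool is a dyadic resolution of the kernel in space, $K=\sum_{s\in\mathbb{Z}}K_{s}$ with $K_{s}$ supported in $\{\lvert x\rvert\sim2^{s}\}$ and $\int K_{s}=0$, so that $Sb_{j}=\sum_{s}K_{s}*b_{j}$; the scales $2^{s}\lesssim\ell(Q_{j})$ give contributions supported in $Q_{j}^{*}$ and may be ignored, so the real task is to sum over the scales $2^{s}\gtrsim\ell(Q_{j})$.

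For those scales one plays off two complementary estimates. Using the cancellation $\int b_{j}=0$, the decay of $\widehat{K_{s}}$ away from the frequency region $2^{s}\lvert\xi\rvert\sim1$, and almost-orthogonality (a Littlewood--Paley / Cotlar-type argument), one obtains an $L^{2}$ bound for $\sum_{j}K_{s}*b_{j}$ that decays in $s-\log_{2}\ell(Q_{j})$, which handles the bounded part of $\Omega$. The unbounded part is organized by its dyadic layers $E_{k}=\{2^{k}\le\lvert\Omega\rvert<2^{k+1}\}$ and treated by elementary $L^{1}$-in-$x$ estimates for the $K_{s}$; the resulting double series over $k$ and $s$ is summable precisely because its total is comparable to $\sum_{k\ge0}(1+k)2^{k}\sigma(E_{k})\approx\int_{S^{n-1}}\lvert\Omega\rvert\log(1+\lvert\Omega\rvert)\,d\sigma$, which is finite by hypothesis. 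Summing all contributions against $\sum_{j}\lvert Q_{j}\rvert\lesssim\alpha^{-1}\lVert f\rVert_{1}$ then closes the estimate. I expect the balancing of these two estimates --- quantifying how many scales genuinely interact with each cube and absorbing the attendant logarithm into the $L\log L$ norm of the angular density, in the spirit of the Christ--Rubio de Francia and Seeger arguments --- to be where essentially all of the difficulty resides; it is also the step that makes the sharpness of the $L\log L$ hypothesis visible, while the good part and the geometric bookkeeping are routine.
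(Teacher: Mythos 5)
The paper does not actually prove this statement: it is quoted as a known result and attributed to the reference [Z1], so there is no in-paper argument to compare yours against. Two preliminary remarks on your reading of the hypothesis. First, you silently replaced the literal statement (an arbitrary kernel $K$ constrained only through its values on the unit sphere) by the classical one, $K(x)=\Omega(x/|x|)|x|^{-n}$ with $\Omega$ mean-zero and in $L\log L(S^{n-1})$; that is the only way the statement becomes a theorem, so the correction is appropriate, but note that the paper later applies the result to a kernel $\check m$ that is not homogeneous, so your (correct) version is not the version the paper actually needs. Second, your treatment of the good part is sound: the formula for the multiplier, the uniform bound via $\exp L$--$L\log L$ duality against $\log(1/|\theta\cdot\omega|)$, and Chebyshev are all standard and complete enough.

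The genuine gap is in the bad part, and you have named it yourself. Everything after ``the argument must be rebuilt using the $L\log L$ hypothesis'' is a description of a program, not a proof: the $L^2$ almost-orthogonality estimate for $\sum_j K_s*b_j$ with decay in $s-\log_2\ell(Q_j)$ is not a routine Cotlar-type computation --- it requires a further angular (microlocal) decomposition of each $K_s$ and a careful exceptional-set construction, and making it work for general $\Omega\in L\log L(S^{n-1})$ in all dimensions is precisely the content of Seeger's 1996 theorem (Christ--Rubio de Francia only reach $L\log L$ in low dimensions). Likewise, the claim that the $L^1$ layer estimates over $E_k=\{2^k\le|\Omega|<2^{k+1}\}$ sum to $\sum_k(1+k)2^k\sigma(E_k)$ presupposes that each layer contributes only $O(k)$ effective scales per cube, which is exactly the quantitative fact one has to prove. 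So the step you defer as ``where essentially all of the difficulty resides'' is not a technical remainder; it is the whole theorem. As written, the proposal is a correct and well-informed roadmap, but it does not constitute a proof.
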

Now, we state the Stein-Tomas restriction theorem (\cite{Tao}).
\begin{theorem}
    Let $1 \leq q \leq \frac{2n+2}{n+3}$. Then, for $f \in \mathcal{S}(\mathbb{R}^n)$ we have $$\|\widehat{f}\|_{L^2(S^{n-1})} \leq \|f\|_q$$
\end{theorem}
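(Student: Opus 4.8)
The plan is to pass to the dual (extension) formulation and reduce the inequality, via a $TT^{*}$ argument, to a convolution estimate for the Fourier transform of the surface measure $d\sigma$ on $S^{n-1}$. Write $Rf=\widehat f\,|_{S^{n-1}}$ for the restriction operator; its formal adjoint is the extension operator $R^{*}g=\widehat{g\,d\sigma}$, and $\|R\|_{L^{q}(\mathbb{R}^{n})\to L^{2}(S^{n-1})}^{2}=\|R^{*}R\|_{L^{q}(\mathbb{R}^{n})\to L^{q'}(\mathbb{R}^{n})}$, where $q'$ is the conjugate exponent of $q$, so the hypothesis reads $q'\ge\frac{2(n+1)}{n-1}$. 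A direct computation shows that, up to a harmless reflection, $R^{*}R$ is convolution with $\widehat{d\sigma}$, so it suffices to prove
\[
\|f*\widehat{d\sigma}\|_{L^{q'}(\mathbb{R}^{n})}\le C\,\|f\|_{L^{q}(\mathbb{R}^{n})},\qquad q'\ge\tfrac{2(n+1)}{n-1},
\]
with $C=C(n,q)$.

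Next I would dissect $\widehat{d\sigma}$ dyadically. By the method of stationary phase, $\widehat{d\sigma}$ is smooth away from the origin and obeys $|\widehat{d\sigma}(\xi)|\le C(1+|\xi|)^{-(n-1)/2}$; fixing a dyadic partition of unity $1=\sum_{j\ge0}\psi_{j}$ with $\operatorname{supp}\psi_{j}\subset\{|\xi|\sim 2^{j}\}$ for $j\ge1$ and $\psi_{0}$ supported near the origin, set $K_{j}=\psi_{j}\,\widehat{d\sigma}$, so that $\widehat{d\sigma}=\sum_{j\ge0}K_{j}$. Each piece admits two estimates. From the pointwise decay, $\|K_{j}\|_{\infty}\le C\,2^{-j(n-1)/2}$, so by Young's inequality convolution with $K_{j}$ maps $L^{1}$ to $L^{\infty}$ with norm $\le C\,2^{-j(n-1)/2}$. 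On the Fourier side, $\widehat{K_{j}}$ is the convolution of $d\sigma$ with an $L^{1}$-normalised bump concentrated at scale $2^{-j}$, and since the $2^{-j}$-neighbourhood of $S^{n-1}$ has measure $\sim 2^{-j}$ this gives $\|\widehat{K_{j}}\|_{\infty}\le C\,2^{j}$, so by Plancherel convolution with $K_{j}$ maps $L^{2}$ to $L^{2}$ with norm $\le C\,2^{j}$.

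Interpolating these two bounds (Riesz--Thorin) for convolution with $K_{j}$ yields, for $q\in[1,2]$ and $\theta=2/q'\in[0,1]$,
\[
\|f*K_{j}\|_{L^{q'}}\le C\,2^{j\beta}\,\|f\|_{L^{q}},\qquad \beta=\frac{2}{q'}-\frac{n-1}{2}\Bigl(1-\frac{2}{q'}\Bigr),
\]
and an elementary computation shows $\beta<0$ precisely when $q'>\frac{2(n+1)}{n-1}$, i.e.\ when $1\le q<\frac{2n+2}{n+3}$. Summing the geometric series $\sum_{j\ge0}2^{j\beta}$ then proves the convolution estimate, hence the restriction inequality, for every $q$ strictly below the endpoint.

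The remaining case $q=\frac{2n+2}{n+3}$ is the main obstacle: there $\beta=0$, and the crude triangle-inequality summation diverges. To capture the endpoint I would run an analytic interpolation argument: embed $\widehat{d\sigma}$ into an analytic family $\{G_{z}\}$ of tempered distributions coming from the meromorphic continuation of a suitably normalised $(1-|\xi|^{2})_{+}^{z}$ (which for a particular $z_{0}$ equals a nonzero constant times $d\sigma$), verify that convolution with $\widehat{G_{z}}$ is bounded on $L^{2}$ when $\operatorname{Re}z=0$ and from $L^{1}$ to $L^{\infty}$ when $\operatorname{Re}z$ is large enough, with operator norms growing admissibly on vertical lines, and then invoke Stein's interpolation theorem for analytic families of operators to land exactly on $q=\frac{2n+2}{n+3}$. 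Everything outside this endpoint step is routine stationary phase and real interpolation; the endpoint is where the genuine work is needed, via Stein's complex-interpolation machinery (or, alternatively, an $L^{2}$-orthogonality argument among the dyadic pieces together with an $\varepsilon$-removal lemma).
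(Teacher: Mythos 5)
Your outline is a correct sketch of the standard Stein--Tomas argument: the $TT^{*}$ reduction to convolution with $\widehat{d\sigma}$, the dyadic decomposition with the $L^{1}\to L^{\infty}$ and $L^{2}\to L^{2}$ bounds on each piece (both of which you compute correctly, and the exponent arithmetic giving $\beta<0$ exactly for $q'>\frac{2(n+1)}{n-1}$ checks out), and Stein's analytic interpolation via the family $(1-|\xi|^{2})_{+}^{z}$ to reach the endpoint $q=\frac{2n+2}{n+3}$. The paper itself gives no proof of this theorem --- it is quoted as a known result with a citation to Tao's lecture notes, and the argument in that reference is precisely the one you describe, so there is nothing to contrast. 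One cosmetic point: the inequality as stated in the paper omits the constant, and should read $\|\widehat{f}\|_{L^{2}(S^{n-1})}\leq C_{n,q}\|f\|_{q}$, as your version correctly has it.
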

In the next theorem, we show that the operators whose symbol is an element of $\mathcal{A}$, are weak $(1,1)$ bounded.
\begin{theorem}
    Let $m \in \mathcal{A},$ and $ 1 \leq q \leq \frac{2n+2}{n+3}$. Then the corresponding multiplier operator $T_m$  is weak $(1,1)$ bounded.
\end{theorem}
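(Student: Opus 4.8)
The plan is to realise $T_m$ as convolution with the kernel $K=\check m$ and then to verify the hypothesis of Theorem~\ref{SI} for this kernel, i.e.\ to show that
$$\int_{S^{n-1}}|\check m(x)|\log\bigl(1+|\check m(x)|\bigr)\,d\sigma_x<\infty .$$
By the discussion in the introduction $T_mf=\check m*f$, so once this integrability is in hand the weak $(1,1)$ boundedness of $T_m$ follows at once from Theorem~\ref{SI}. Thus the whole argument reduces to controlling the restriction of $\check m$ to the unit sphere.

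To that end, fix a representation $m=\sum_i f_i*g_i$ with $\sum_i\|f_i\|_1\|g_i\|_q<\infty$. Since the inverse Fourier transform turns convolution into pointwise multiplication, $\check m=\sum_i\check f_i\,\check g_i$; here the series converges in $\mathcal{S}'(\mathbb{R}^n)$ because $\sum_i f_i*g_i$ converges in $L^q(\mathbb{R}^n)$, and moreover $\check m\in L^{q'}(\mathbb{R}^n)$ is a genuine function by Hausdorff--Young (note $1\le q\le\frac{2n+2}{n+3}\le 2$), so $T_mf=\check m*f$ is a well-defined measurable function for every $f\in L^1(\mathbb{R}^n)$. For each $i$ the function $\check f_i$ is continuous with $\|\check f_i\|_\infty\le\|f_i\|_1$, while the Stein--Tomas restriction theorem --- applicable precisely because $q\le\frac{2n+2}{n+3}$, and extended from $\mathcal{S}(\mathbb{R}^n)$ to $L^q(\mathbb{R}^n)$ by density --- gives $\|\check g_i\|_{L^2(S^{n-1})}\lesssim\|g_i\|_q$. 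Consequently $\|\check f_i\,\check g_i\|_{L^2(S^{n-1})}\lesssim\|f_i\|_1\|g_i\|_q$, the series $\sum_i\check f_i\check g_i$ converges absolutely in the Banach space $L^2(S^{n-1})$, and
$$\|\check m\|_{L^2(S^{n-1})}\lesssim\sum_i\|f_i\|_1\|g_i\|_q<\infty .$$

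It remains to pass from this $L^2(S^{n-1})$ bound to the required $L\log L$ integrability, which is now elementary because $S^{n-1}$ has finite surface measure: using $t\log(1+t)\lesssim t$ for $0\le t\le 1$ and $t\log(1+t)\lesssim t^2$ for $t\ge 1$, together with Cauchy--Schwarz on $S^{n-1}$, one gets
$$\int_{S^{n-1}}|\check m|\log\bigl(1+|\check m|\bigr)\,d\sigma\ \lesssim\ \|\check m\|_{L^1(S^{n-1})}+\|\check m\|_{L^2(S^{n-1})}^2\ \lesssim\ \|\check m\|_{L^2(S^{n-1})}+\|\check m\|_{L^2(S^{n-1})}^2\ <\ \infty .$$
Feeding this into Theorem~\ref{SI} with $K=\check m$ finishes the proof.

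The step I expect to need the most care is the appeal to the restriction theorem: one must justify that $\check g_i|_{S^{n-1}}$ is meaningful for a general $g_i\in L^q(\mathbb{R}^n)$ (boundedness of the restriction operator on $L^q$, obtained by density), that the trace of the product $\check f_i\check g_i$ on the sphere really is $\check f_i|_{S^{n-1}}\,\check g_i|_{S^{n-1}}$ (which uses continuity of $\check f_i$ and a short approximation argument via Schwartz functions), and that the inverse Fourier transform may be interchanged with the infinite sum --- all of which rest on the absolute convergence $\sum_i\|f_i\|_1\|g_i\|_q<\infty$ and on $f_i*g_i\in L^q(\mathbb{R}^n)$ for each $i$. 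Everything after the $L^2(S^{n-1})$ estimate, namely the embedding $L^2\hookrightarrow L\log L$ on a finite measure space and the invocation of Theorem~\ref{SI}, is routine.
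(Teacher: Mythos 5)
Your proposal is correct and follows essentially the same route as the paper: reduce to Theorem~\ref{SI}, write $\check m=\sum_i\check f_i\check g_i$, bound $\|\check f_i\|_\infty\le\|f_i\|_1$, and control $\check g_i$ on the sphere via Stein--Tomas. The only (cosmetic) difference is that you first establish $\|\check m\|_{L^2(S^{n-1})}<\infty$ and then invoke $L^2\hookrightarrow L\log L$ on the finite-measure sphere, whereas the paper bounds the $L\log L$ integral directly using $\log(1+t)\le t$, a double sum, and Cauchy--Schwarz --- the same estimates in a different order; your version is, if anything, a bit more careful about the density and trace issues in applying restriction to general $g_i\in L^q$.
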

\begin{proof}
    Given that $m=\sum_if_i*g_i$. As $m \in L^q(\mathbb{R}^n)$ and $L^q(\mathbb{R}^n) \subset L^1(\mathbb{R}^n)+L^2(\mathbb{R}^n)$, the Fourier transform of $m$ is meaningful here. So $\Check{m}=\sum_i\Check{f_i}\Check{g_i}$. Also, $$T_mf(x)=(\sum_i\Check{f_i}\Check{g_i})*f(x).$$ Now, we use the Theorem \ref{SI}. We have to estimate the following term
        $$\int_{S^{n-1}}|\sum_i\Check{f_i}\Check{g_i}(x)|\text{log}(1+|\sum_j\Check{f_j}\Check{g_j}(x)|)d\sigma_{x}.$$
  By Fubini's theorem, this term is bounded by      $$\Sigma_i\int_{S^{n-1}}|\Check{f_i}\Check{g_i}(x)|\text{log}(1+|\Sigma_j\Check{f_j}\Check{g_j}(x)|)d\sigma_{x},$$  which is dominated by
$$\Sigma_i\|\Check{f_i}\|_{\infty}\int_{S^{n-1}}|\Check{g_i}(x)|\text{log}(1+|\Sigma_j\Check{f_j}\Check{g_j}(x)|)d\sigma_{x}.$$ Now, using the inequality $\text{log}(1+|x|) \leq |x|,$ we have the above term is bounded by
$$\Sigma_i\|\Check{f_i}\|_{\infty}\int_{S^{n-1}}|\Check{g_i}(x)||\Sigma_j\Check{f_j}\Check{g_j}(x)|d\sigma_{x},$$ is dominated by
$$\Sigma_{i,j}\|\Check{f_i}\Check{f_j}\|_{\infty}\int_{S^{n-1}}|\Check{g_i}(x)||\Check{g_j}(x)|d\sigma_{x}.$$ After that, by H\"older's inequality on $S^{n-1}$ we get
$$\Sigma_{i,j}\|\Check{f_i}\Check{f_j}\|_{\infty}\|\Check{g_i}(x)\|_{L^2(\sigma_x)}\|\Check{g_j}(x)\|_{L^2(\sigma_x)}.$$ Finally, using restriction and Hausdorff-Young's inequality we have
$$\Sigma_{i,j}\|{f_i}\|_{1}\|{f_j}\|_{1}\|{g_i}\|_{q}\|{g_j}\|_{q},$$ which is equal to
$$ (\Sigma_{i}\|{f_i}\|_{1}\|{g_i}\|_{q})^2.$$ The last term is finite as $m \in \mathcal{A}, 1 \leq q \leq \frac{2n+2}{n+3}$. So by the above theorem, the operator $T_m$ is weak $(1,1)$ bounded.
\end{proof}
\begin{remark}
    The operator $T_m$ defined in the preceding theorem is not weak $(p,p)$ bounded for any $1<p\leq \infty$. Indeed, if $T_m$ were weak $(p,p)$ bounded for some $p\neq 1$ then by the Marcinkiewicz interpolation theorem it would be strong $(s,s)$ bounded for all $1<s<p$. In particular, strong $(2,2)$ boundedness would imply that $m \in L^{\infty}(\mathbb{R}^n)$, which is not the case in general. This contradiction shows that $T_m$ cannot be weak $(p,p)$-bounded for any $p>1$. \end{remark}
    \begin{remark}
        The multipliers in the class $\mathcal{A}$ do not satisfy the smoothness and decay conditions of H\"ormander or Mikhlin type, which guarantee weak $(1,1)$ boundedness. Consequently, the standard multiplier theorems do not apply, and the proof method developed here is essential to handle these cases. 
    \end{remark}
    \section{Sharpness}
    \noindent In this section, we show the sharpness of the main result. We are motivated to do that from \cite{gra}.
    \begin{proposition}
    Let $m \in \mathcal{A}$. Then the operator $T_m: L^1(\mathbb{R}^n) \to L^{1,r}(\mathbb{R}^n)$ is not bounded for $1\leq r < \infty$.
    \end{proposition}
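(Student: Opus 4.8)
The plan is to argue by contradiction, reducing everything to the behaviour of the convolution kernel $\check m$ at spatial infinity. Suppose $T_m\colon L^1(\mathbb R^n)\to L^{1,r}(\mathbb R^n)$ were bounded for some $r\in[1,\infty)$. First I would pass from the operator to its kernel: fix $\phi\in\mathcal S(\mathbb R^n)$ with $\int\phi=1$ and put $\phi_\delta(x)=\delta^{-n}\phi(x/\delta)$; then $T_m\phi_\delta=\check m*\phi_\delta$, so $\|\check m*\phi_\delta\|_{1,r}\le C\|\phi\|_1$ uniformly in $\delta>0$. Since $m\in\mathcal A\subset L^q(\mathbb R^n)$ with $q\le\frac{2n+2}{n+3}<2$, Hausdorff--Young gives $\check m\in L^{q'}(\mathbb R^n)\subset L^1_{\mathrm{loc}}(\mathbb R^n)$, hence $\check m*\phi_\delta\to\check m$ at every Lebesgue point as $\delta\to0$. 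The Lorentz functional $\|\cdot\|_{1,r}$ is lower semicontinuous under almost-everywhere convergence (Fatou applied successively to the distribution function, the non-increasing rearrangement, and the integral defining $\|\cdot\|_{1,r}$), so one obtains $\check m\in L^{1,r}(\mathbb R^n)$ with $\|\check m\|_{1,r}\le C\|\phi\|_1<\infty$.

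Next I would contradict this using the way the relevant multipliers in $\mathcal A$ are built --- precisely so that $T_m$ is only barely weak $(1,1)$. Quantitatively, the construction furnishes a critical lower bound on the kernel at infinity, namely $|\{x:|\check m(x)|>\lambda\}|\gtrsim\lambda^{-1}$ for all small $\lambda>0$ (equivalently, $\check m$ carries a tail of order $|x|^{-n}$). This forces $(\check m)^*(t)\gtrsim t^{-1}$ for all large $t$, whence, for every $r<\infty$,
\[
\|\check m\|_{1,r}^r=\int_0^\infty\bigl(t\,(\check m)^*(t)\bigr)^r\frac{dt}{t}\ \gtrsim\ \int_1^\infty\frac{dt}{t}=\infty,
\]
so $\check m\notin L^{1,r}(\mathbb R^n)$, contradicting the first step. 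Therefore $T_m$ cannot be bounded from $L^1$ into $L^{1,r}$ for any $r<\infty$; together with the weak $(1,1)$ bound of the previous section (i.e.\ $\check m$ is a weak $L^1$ convolver) this is exactly the asserted sharpness. For the endpoint $r=1$ one has the shorter alternative: $L^{1,1}=L^1$, and boundedness on $L^1$ of the translation-invariant operator $T_m$ would force $\check m$ to be a finite measure, hence $m\in L^\infty(\mathbb R^n)$, contrary to the choice of $m$.

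I expect the main obstacle to be the kernel lower bound invoked in the second step: one must unwind the factorised representation $m=\sum_i f_i*g_i$ --- and the concrete choice of the factors $f_i,g_i$ made in the construction --- to obtain the quantitative estimate $|\{|\check m|>\lambda\}|\gtrsim\lambda^{-1}$, that is, that $\check m$ genuinely decays no faster than $|x|^{-n}$ on a set of positive density at infinity. This is the step that actually uses the design of the examples; by contrast the reduction from $T_m$ to $\check m$ via an approximate identity, the a.e.\ convergence $\check m*\phi_\delta\to\check m$ at Lebesgue points, and the lower semicontinuity of $\|\cdot\|_{1,r}$ are routine, and I would only sketch them.
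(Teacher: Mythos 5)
Your overall architecture (dualize to the kernel via an approximate identity, use lower semicontinuity of $\|\cdot\|_{1,r}$ under a.e.\ convergence to conclude $\check m\in L^{1,r}$, then contradict this with a lower bound on $\check m$) is coherent, and it is genuinely different from what the paper does. But the step you yourself flag as "the main obstacle" is not a technical loose end --- it is the entire content of the statement, and as formulated it cannot be supplied. Membership in $\mathcal{A}$ imposes no lower bound whatsoever on $\check m$: if $f$ and $g$ are both Gaussians then $m=f*g\in\mathcal{A}$, $\check m$ is a Schwartz function, and $T_m$ is bounded from $L^1$ to $L^1=L^{1,1}\subset L^{1,r}$. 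So the claimed tail estimate $|\{x:|\check m(x)|>\lambda\}|\gtrsim\lambda^{-1}$ is false for some elements of $\mathcal{A}$, and any correct argument must single out particular multipliers; your proposal never exhibits one, nor does it extract any lower bound from the factorized representation $m=\sum_i f_i*g_i$. Note also that even the paper's own witness, with $\hat m(y)=e^{-y^2}y^{-2}\chi_{(1,\infty)}(y)$, has a kernel with Gaussian decay --- the opposite of the $|x|^{-n}$ tail your second step postulates --- so the missing lemma is not hiding in the construction either.

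For comparison, the paper takes a direct route with no contradiction and no passage to the kernel: it fixes a concrete one-dimensional $m=f*g$ (with $\hat f(y)=e^{-y^2}$ and $\hat g(y)=y^{-2}\chi_{(1,\infty)}(y)$), tests $T_m$ against the normalized Gaussians $h_n(y)=n^{-1/2}e^{-y^2/n}$ with $\|h_n\|_1=\sqrt{\pi}$, and lower-bounds $\|T_mh_n\|_{1,r}$ by an explicit integral claimed to grow like $n^{(r-1)/2}$. That approach at least isolates a specific candidate and a specific testing family, which is exactly what your proposal lacks. Your side remark for $r=1$ (boundedness on $L^1$ of a convolution operator forces the kernel to be a finite measure, hence $m\in L^\infty$) is correct and clean, but it too only applies to those $m\in\mathcal{A}$ that are unbounded, so it again presupposes a choice of $m$ rather than covering all of $\mathcal{A}$. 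As it stands, the proposal is a reduction of the problem to an unproved (and in general false) kernel estimate, not a proof.
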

    \begin{proof}
        We first treat the one dimensional case and assume that $m$ has only a single non zero term in its defining series. Let $$m=f*g,$$ where $f(y)$ and $g(y)$ are the inverse Fourier transform of $e^{-y^2}$ and $\frac{1}{y^2}\chi_{(1, \infty)}(y)$ respectively, where $\chi_{(1, \infty)}$ is characteristic function of the interval $(1, \infty)$. Consider the sequence of Schwartz functions $\{h_n\}$ defined by $$h_n(y)=\frac{1}{\sqrt{n}}e^{-\frac{x^2}{n}}.$$ It is straightforward to check that $$\|h_n\|_1=\sqrt{\pi}$$ for all $n$. We aim to estimate the Lorentz norm $$\|\hat{f}\hat{g}*h_n\|_{1,r},$$ and show that it diverges to $\infty$ as $n \to \infty.$
        By definition it is equal to $$(\int_0^{\infty}(t(\hat{f}\hat{g}*h_n)^*(t))^r\frac{dt}{t})^{\frac{1}{r}}.$$ Substituting the expressions for $f, g$ and $h_n$ and applying the inequality $$(x-y)^2 \leq 2(x^2+y^2)$$ we obtain a lower bound of the form $$\frac{C}{\sqrt{n}}\int_0^{\infty}t^{r-1}e^{-\frac{rt^2}{2n}}dt,$$ for some constant $C$ independent of $n$.  Evaluating the integral yields $$C'n^{\frac{r-1}{2}},$$ for some constant $C'$ only depend on $r$. For $r>1$ the expression tends to $\infty$ as $n \to \infty$.
    \end{proof}
\section*{Declaration}
\noindent Ethical approval: We hereby declare that this work has no conflict of interest, neither personal nor financial.\\
Availability of data and material : The data that support the findings of this study are previous research works given in the reference and cited in this work.

\end{document}